\documentclass[reqno]{amsart}
\usepackage{amssymb, latexsym}

\usepackage{xspace}
\usepackage[bookmarksnumbered,colorlinks]{hyperref}
\usepackage{graphics}
\def\bb#1\eb{\textcolor{blue}
{#1}} %
\def\br#1\er{\textcolor{red}
{#1}} %
\def\bv#1\ev{\textcolor{green}
{#1}} %
\def\bc#1\ec{\textcolor{cyan}
{#1}} %

\usepackage{graphics}

\def\bal#1\eal{\begin{align}#1\end{align}}                      %
\def\baln#1\ealn{\begin{align*}#1\end{align*}}          
\def\bml#1\eml{\begin{multline}#1\end{multline}}        %
\def\bmln#1\emln{\begin{multline*}#1\end{multline*}}  %
\def\bga#1\ega{\begin{gather}#1\end{gather}}
\def\bgan#1\egan{\begin{gather*}#1\end{gather*}}


\makeatletter

\@addtoreset{equation}{section} \makeatother
\newtheorem{theorem}{Theorem}[section]
\newtheorem{lemma}[theorem]{Lemma}
\newtheorem{proposition}[theorem]{Proposition}
\newtheorem{corollary}[theorem]{Corollary}

\newtheorem{definition}[theorem]{Definition}
\newtheorem{example}[theorem]{Example}
\newtheorem{remark}[theorem]{Remark}


\newcommand{\m}{{\mathcal M}}
\newcommand{\J}{{\mathcal J}}
\newcommand{\s}{{\mathcal S}}
\renewcommand{\L}{{\mathbb L}}

\newcommand{\R}{{\mathbb R}}
\newcommand{\acca}{{\mathcal H}}
\newcommand{\elle}{{\mathcal L}}
\newcommand{\<}{\langle}
\renewcommand{\>}{\rangle}


\hyphenation{Lo-ren-tzian}

\title{Remarks on global geodesic properties of G\"odel type spacetimes}
\thanks{This paper acknowledges the partial support of the Spanish
Grants with FEDER funds  MTM2010-18099 (MICINN). Furthermore, 
R. Bartolo and A.M. Candela acknowledge also the partial support of M.I.U.R. Research Project 
PRIN2009 ``Metodi Variazionali  e Topologici nello Studio di Fenomeni Nonlineari''
and of the G.N.A.M.P.A. Research Project 2011 ``Analisi Geometrica sulle Variet\`a
di Lorentz ed Applicazioni alla Relativit\`a Generale''; J.L. Flores acknowledges also the partial support of the Regional J.
Andaluc\'{\i}a Grant P09-FQM-4496, with FEDER funds.}

\author[R. Bartolo]{R. Bartolo}
\address{Rossella Bartolo\hfill\break\indent
Dipartimento di Ingegneria Meccanica e Gestionale\hfill\break\indent
Politecnico di Bari\hfill\break\indent
Via E. Orabona 4, 70125 Bari\hfill\break\indent
Italy}
\email{r.bartolo@poliba.it}

\author[A.M. Candela]{A.M. Candela}
\address{Anna Maria Candela \hfill\break\indent
Dipartimento di Matematica\hfill\break\indent
Universit\`a degli Studi di Bari ``Aldo Moro''\hfill\break\indent 
Via  Orabona 4, 70125 Bari\hfill\break\indent
Italy}
\email{candela@dm.uniba.it}

\author[J.L.
Flores]{J.L. Flores}
\address{Jos\'e Luis Flores \hfill\break\indent
Departamento de \'Algebra, Geometr\'{\i}a y
Topolog\'{\i}a, Facultad de Ciencias
\hfill\break\indent Universidad de M\'alaga \hfill\break\indent
Campus Teatinos, 29071 M\'alaga\hfill\break\indent
Spain}
\email{floresj@agt.cie.uma.es}

\subjclass[2000]{53C50, 53C22, 58E10}

\keywords{G\"odel type
spacetime, static spacetime, geo\-de\-sic connectedness,
geo\-de\-sic completeness, variational principle.}

\date{}

\begin{document}

\begin{abstract}
The aim of this paper is to review and
complete the study of geodesics on G\"odel type spacetimes
initiated in \cite{CS} and improved in \cite{BCF}. In particular,
we prove some new results on geodesic connectedness and geodesic
completeness for these spacetimes.

\end{abstract}

\maketitle

\section{Introduction}
Classical critical points theorems and standard Morse theory are
directly applicable to functionals bounded from below which
satisfy compactness assumptions, such as the Palais--Smale
condition (see Section \ref{tools}), and whose critical points
have finite Morse index. Unluckily, these tools cannot be applied
to many interesting problems involving functionals that are
strongly indefinite. For example, geodesics joining two points
$z_p$, $z_q$ on an indefinite semi--Riemannian manifold $(\m,
\langle\cdot,\cdot\rangle_L)$ are the critical points of the
strongly indefinite $C^1$ action functional
\begin{equation}\label{act}
f(z)= \int_0^1\langle\dot z,\dot z\rangle_L{\rm d}s
\end{equation}
defined on the Hilbert manifold $\Omega$ of all the $H^1$--curves
joining $z_p$ to $z_q$ in $\m$ (for more details, see Section
\ref{tools}). Anyway, starting from the seminal paper \cite{BF},
in some particular settings, and according to the properties of
the manifold $\m$ and its indefinite metric
$\langle\cdot,\cdot\rangle_L$, the functional $f$ in (\ref{act})
has been widely studied by using variational methods, also
obtaining sometimes optimal results at least in the Lorentzian
case (we refer to the updated survey paper \cite{CS2} and
references therein). A typical situation occurs when the
Lorentzian metric tensor $\langle\cdot,\cdot\rangle_L$ presents
symmetries (i.e., Killing vector fields): one gets rid of the
negative contributions in the directions of the Killing fields
and, by means of some variational principles, it is possible to
handle with simpler functionals, which essentially depend only on
a Riemannian metric, so that they are bounded from below and
satisfy the Palais--Smale condition under reasonable assumptions.
This is the case of standard stationary and G\"odel type
spacetimes.

\begin{definition}\label{type1}
{\rm A Lorentzian manifold $(\m, \langle\cdot,\cdot\rangle_L)$ is
a {\em standard stationary spacetime} if there exist
a smooth finite--dimensional Riemannian manifold $(\m_0,\langle\cdot,\cdot\rangle_R)$,
a vector field $\delta$ and a positive smooth function $\beta$ on $\m_0$
such that $\m=\m_0\times\R$ and the Lorentz metric (under natural identifications)
is
\begin{equation}\label{metrica}
\langle\cdot,\cdot\rangle_L\ =\
\langle\cdot,\cdot\rangle_R + 2\langle\delta(x),\cdot\rangle_R \ dt - \beta(x)\ dt^2.
\end{equation}
When the cross term vanishes ($\delta\equiv 0$) the spacetime is called {\em standard static}.
This is a warped product $\m_0\times_{\sqrt\beta}\R$ with Riemannian base and negative definite fiber.}
\end{definition}
Recall that every stationary spacetime (i.e., a spacetime
admitting a timelike Killing vector field $K$) is locally a
standard stationary one with $K = \partial_t$.

On the other hand, G\"odel type spacetimes are Lorentzian
manifolds admitting a pair of commuting Killing vector fields
which span a two dimensional distribution where the metric has
index $1$ (the causal characters of the Killing vectors could
change on the manifold, see \cite[Example 5.1]{CSS}). More precisely,
we use the following definition (according to \cite{CS}):

\begin{definition}\label{type}
{\rm A Lorentzian manifold $(\m, \langle\cdot,\cdot\rangle_L)$ is
a {\em G\"odel type spacetime}, briefly $(GTS)$, if a
smooth finite--dimensional Riemannian manifold
$(\m_0,\langle\cdot,\cdot\rangle_R)$ exists such that $\m = \m_0 \times
\R^2$ and the metric $\langle\cdot,\cdot\rangle_L$ is described
as
\begin{equation}
\label{metric} \langle\cdot,\cdot\rangle_L\ =\
\langle\cdot,\cdot\rangle_R + A(x) dy^2 + 2 B(x) dy dt  - C(x)dt^2 ,
\end{equation}
where $x \in \m_0$, the variables $(y,t)$ are the natural
coordinates of $\R^2$ and $A$, $B$, $C$ are $C^1$ scalar fields on
$\m_0$ satisfying
\begin{equation}
\label{stima1} H(x)\ =\ B^2(x) + A(x) C(x) > 0\quad \mbox{for
all $x \in \m_0$.}
\end{equation}}
\end{definition}

Let us observe that condition (\ref{stima1}) implies that metric
(\ref{metric}) is Lorentzian.
It is also interesting to point out that $(GTS)$ are not necessarily
time--orientable (e.g., cf. \cite[Remark 1.2]{CSS}).

In \cite{g} G\"odel gives an exact solution of Einstein's field
equations with homogeneous perfect fluid distribution, the
so-called classical G\"odel Universe. This spacetime, described in
Example \ref{esem}$(e_1)$ below (see also \cite{cw, k} where its
geodesic equations are explicitly integrated), has a five
dimensional group of isometries, is geodesically complete and
admits closed causal curves (e.g., cf. \cite{he}). In \cite{rt}
Raychaudhuri and Thakurta start the study of homogeneity
properties of $(GTS)$ investigating homogeneity conditions of a
class of cylindrically symmetric metrics; later on, in \cite{ret}
Rebou\c{c}as and Tiomno introduce a definition for G\"odel metrics
in four dimensions and study their homogeneity conditions (see
also \cite{cdt, rtt}).

\begin{example}\label{esem}
{\rm The class of $(GTS)$ depicted in Definition \ref{type} is wide;
indeed this definition covers very different kinds of spacetimes,
including some physically relevant examples.
\begin{itemize}
\item[$(e_1)$] $\;\;$ The G\"odel Universe (cf. \cite{g}) is a
$(GTS)$ with
\[
\m_{0}=\R^{2},\quad \langle\cdot,\cdot\rangle_R = d x_1^2 + d x_2^2
\]
and with coefficients in (\ref{metric}) given by
\[
A(x) = - {\rm e}^{2 \sqrt 2 \omega x_1}/2, \quad B(x) = - {\rm e}^{\sqrt 2 \omega
x_1}, \quad C(x) \equiv 1
\]
($\omega >0$ represents the magnitude of the vorticity of the
flow). In \cite{CS}, by a direct integration of the geodesic
equations, it is constructed a geodesic joining each couple of
points in $\m$. 
\item[$(e_2)$]  $\;\;$ The G\"odel--Synge
spacetimes (cf. \cite{sant}) are $(GTS)$ with $\m_0=\R^{2}$ and
\[
\langle\cdot,\cdot\rangle_L = d x_1^2 + d x_2^2 - g(x_1) dy^2 - 2h(x_1) dy dt - dt^2,
\]
where $g, h$ are smooth functions of $x_1$ with $g>0$. If $2g=h^2$
and $h={\rm e}^{x_1}$, this metric reduces to the G\"odel
classical one. 
\item[$(e_3)$]  $\;\;$ Some Kerr--Schild spacetimes
(e.g., cf. \cite{ks}) are $(GTS)$ with again $\m_0 = \R^2$ and 
\[
\langle\cdot,\cdot\rangle_L = d x_1^2 + d x_2^2 +
dy^2 - dt^2 + V(x_1,x_2)(dy + dt)^2,
\]
where $V$ is an arbitrary function on $\R^2$. In this case, the
coefficients in (\ref{metric}) are
\[
A(x)= 1 + V(x),\quad B(x)=V(x), \quad C(x)= 1- V(x),
\]
and thus, $H(x)\equiv 1$. 
\item[$(e_4)$]  $\;\;$ Some standard
stationary spacetimes are $(GTS)$ with $\m =\m_0\times \R^2$,
being $(\m_0\times\R,\langle\cdot,\cdot\rangle_R + d y^2)$ the Riemannian part
and  
\[
\langle\cdot,\cdot\rangle_L = \langle\cdot,\cdot\rangle_R + d y^2 + 2 \delta(x) dy dt - \beta(x)dt^2
\]
the stationary metric with $\delta(x,y)\equiv \delta(x)\in\R$
and $\beta(x,y)\equiv\beta(x) > 0$ in $\m_0\times\R$.
Clearly, they are $(GTS)$ with $A(x) \equiv 1$, $B(x) = \delta(x)$ and $C(x) = \beta(x)$.\\
Vice versa, some $(GTS)$ are standard
stationary spacetimes with the product $A(x) C(x)>0$ on $\m_0$,
being standard static if, in addition, $B\equiv 0$. 
For example, if $A(x)>0$ on $\m_0$, the spatial part of the stationary
spacetime corresponds to $\m_0\times\R$ equipped with the
Riemannian metric $\langle\cdot,\cdot\rangle_R + A(x)dy^2$ (which
is complete if so is $\langle\cdot,\cdot\rangle_R$), the vector
field becomes $\delta(x,y)=(0,B(x))\in T\m_0\times\R$ and the
scalar field is $\beta(x,y) = C(x)>0$ for each $(x,y)\in
\m_0\times\R$. 
\item[$(e_5)$]  $\;\;$ Some examples of general
plane fronted waves are also $(GTS)$. More precisely, a {\em general
plane fronted wave} is a Lorentzian manifold $\m_0\times\R^2$
equipped with the metric
\[
\langle\cdot,\cdot\rangle_L=\langle\cdot,\cdot\rangle_R  + 2 dy dt + H_0(x,t) dt^2,
\]
where $(\m_0,\langle\cdot,\cdot\rangle_R)$ is a Riemannian
manifold, $(y,t)$ are the natural coordinates of $\R^2$ and the
smooth scalar field $H_0$ on $\m_0\times\R$ satisfies $H_0\not\equiv
0$. Clearly, when $H_0(x,t)$ is autonomous (i.e., it does not depend
on $t$), this spacetime is a $(GTS)$.\\
Results on geodesic completeness and connectedness for these
spacetimes can be found in \cite{CFS2}.
\end{itemize}}
\end{example}

The importance of the spacetimes above justifies the study of global
properties such as geodesic connectedness and geodesic completeness.
However, one cannot expect to prove
general results: indeed the global properties depend on the behaviour of the coefficients
(see respectively Theorems \ref{t1} and \ref{teocom} and related comments). 
Also the study of global hyperbolicity of $(GTS)$ is interesting but,
except in the case of classical G\"odel Universe, this is still an open problem
even if we are lead to think that at least the
chronology condition should be verified. This idea comes out as in
most of the previous references, when global properties can be
studied by variational methods, then the stated assumptions imply
also global hyperbolicity (cf. \cite{CS2} and references therein).
In fact, for example in the particular case of standard stationary 
spacetimes, we know sufficient conditions,
involving the coefficients of the metric, which ensure global
hyperbolicity (cf. \cite[Corollary 3.4]{san}), while recently a
characterization has been proven (see \cite[Theorem
4.3]{cjs}). 

The paper is organized as follows. In Section \ref{tools} we
recall some variational principles for geodesics on static
spacetimes and $(GTS)$. In Section \ref{geo} we present a new result
on geodesic connectedness, and compare it with the previous ones
in \cite{BCF}, showing its accuracy by examples. In Section
\ref{sec:1} we deal with geodesic completeness, and state a
sufficient condition in order to obtain it. Finally, in the
Appendix we fix some widely known notations about the variational
set up.

\section{The variational principle}\label{tools}

According to notations and statements contained in the Appendix,
there is a correspondence between geodesics joining two given
points $z_p$, $z_q$ on a semi--Riemannian manifold $(\m,
\langle\cdot,\cdot\rangle_L)$ and critical points of the action
functional $f$ in (\ref{act}) on the Hilbert manifold
$\Omega^1(z_p,z_q)$. As already remarked, if
$\langle\cdot,\cdot\rangle_L$ is not Riemannian then $f$ is
strongly indefinite, but, in some Lorentzian manifolds, this
difficulty can be overcome by introducing a new suitable
functional.

The kernel of our approach is a variational principle stated in
\cite[Theorem 2.1]{BFG} for static Lorentzian manifolds
$\m=\m_0\times\R$, with $\langle\cdot,\cdot\rangle_L$ as in
(\ref{metrica}) and $\delta\equiv 0$. It is based on the fact that
$\langle \partial_t,\dot z\rangle_L$ is constant along each
geodesic $z$, because of the Killing character of $\partial_t$.
Namely, $z_{p}=(x_p,t_p)$, $z_{q}=(x_q,t_q) \in \m$ are connected
by a geodesic $\bar z=(\bar x,\bar t)$, which is a critical point
of functional $f$ in (\ref{act}) on
$\Omega^{1}(z_{p},z_{q})=\Omega^1(x_p,x_q)\times W(t_p,t_q)$, if
and only if $\bar x$ is a critical point of the functional
\begin{equation}\label{newfunc}
J(x)\ =\ \frac 12\int_0^1 \langle \dot x,\dot x\rangle_R \, {\rm
d}s-\frac{\Delta_t^2}{2}\left(\int_0^1 \frac 1{\beta(x)}\, {\rm
d}s\right)^{-1}\quad\hbox{on
$\Omega^1(x_p,x_q)$,}
\end{equation}
with $\Delta_t:=t_p-t_q$ (for the stationary case we refer to \cite[Section 4.2]{CS2} and
references therein).

Next, let us consider the more general setting of $(GTS)$ with
$\m=\m_0\times\R^2$ and $\langle\cdot,\cdot\rangle_L$ as in
Definition \ref{type}. For each $x\in H^1(I,\m_0)$ let us define
\begin{eqnarray}
& &a(x) = \int_0^1 \frac{A(x)}{H(x)} \; {\rm d}s ,\;\; b(x) =
\int_0^1 \frac{B(x)}{H(x)}\; {\rm d}s , \;\; c(x) = \int_0^1
\frac {C(x)}{H(x)}\; {\rm d}s, \label{cdotrho}
\\
& &\elle(x) = b^2(x) + a(x) c(x). \label{cdotrho1}
\end{eqnarray}
As every $(GTS)$ admits two Killing vector fields $\partial_y$,
$\partial_t$ (not necessarily timelike), an extension of the
previous variational principle can be stated (cf.
\cite[Proposition 2.2]{CS}). In this setting, fixing $z_p =
(x_p,y_p,t_p)$, $z_q = (x_q,y_q,t_q) \in \m$, with $x_p$, $x_q \in
\m_0$ and $(y_p,t_p)$, $(y_q,t_q) \in \R^2$, we have that $\bar z
:I\rightarrow \m$ is a geodesic joining $z_p$ to $z_q$ in $\m$ if
and only if it is a critical point of the action functional
(\ref{act}), with $\langle\cdot,\cdot \rangle_L$ as in
(\ref{metric}), defined on the manifold $\Omega^1(z_p,z_q)=
\Omega^1(x_p,x_q) \times W(y_p,y_q) \times W(t_p,t_q)$.\\
Let $x \in \Omega^1(x_p,x_q)$ be such that $\elle(x) \ne 0$ (cf.
(\ref{cdotrho1})). For all $s\in I$ we define
\begin{eqnarray*}
&&\begin{aligned} \phi_y(x)(s)\ :=\ & y_p + \ \frac{\Delta_y\ b(x)
- \Delta_t\ c(x)} {\elle(x)}\
\int_0^s \frac {B(x)} {\acca(x)}\; {\rm d}\sigma\\
&+\ \frac{\Delta_y\ a(x) + \Delta_t\ b(x)} {\elle(x)}\
\int_0^s \frac{C(x)} {\acca(x)}\; {\rm d}\sigma ,
\end{aligned}\label{uai}\\
&&\begin{aligned} \phi_t(x)(s)\ :=\ & t_p - \ \frac{\Delta_y\ b(x)
- \Delta_t\ c(x)} {\elle(x)}\
\int_0^s \frac{A(x)} {\acca(x)}\; {\rm d}\sigma\\
&+\ \frac{\Delta_y\ a(x) + \Delta_t\ b(x)} {\elle(x)}\ \int_0^s
\frac{B(x)}{\acca(x)}\; {\rm d}\sigma
\end{aligned}
\end{eqnarray*}
with $\Delta_y := y_q-y_p$ and $\Delta_t := t_q-t_p$. Standard
arguments imply that the functions $\phi_y$ and $\phi_t$, which go
from $\Omega^1(x_p,x_q)$ to $W(y_p,y_q)$ and $W(t_p,t_q)$, resp.,
are $C^{1}$.

Then, the following proposition holds (see
\cite[Proposition 2.2]{CS}).

\begin{proposition}\label{principe} Let $(\m,\langle\cdot,\cdot\rangle_L)$
be a $(GTS)$ and $x_p$, $x_q \in \m_0$ be such that $|\elle(x)| > 0$ for all $x
\in \Omega^1(x_p,x_q)$. Then, the following statements are equivalent:
\begin{itemize}
\item[{\sl (i)}] $\bar z \in \Omega^1(z_p,z_q)$ is a critical
point of the action functional $f$ in (\ref{act}); \item[{\sl
(ii)}] $\;$ setting $\bar z = (\bar x,\bar y,\bar t)$, the curve
$\bar x \in \Omega^1(x_p,x_q)$ is a critical point of the $C^1$
functional
\begin{equation}\label{newfunct1}
\J(x) = \frac 12\ \int_0^1 \langle\dot x,\dot x\rangle_R\; {\rm
d}s\  +\ \frac{\Delta_y^2 a(x) + 2 \Delta_y \Delta_t b(x) -
\Delta_t^2 c(x)} {2 \elle(x)}\quad\hbox{on $\Omega^1(x_p,x_q)$}
\end{equation}
(see (\ref{cdotrho})--(\ref{cdotrho1})), while $\bar y =
\phi_y(\bar x)$, $\bar t = \phi_t(\bar x)$, with $\phi_y$,
$\phi_t$ as above.
\end{itemize}
Furthermore,
\[
\J(x)\ =\ f(x,\phi_y(x),\phi_t(x)) \quad \hbox{for all $x \in
\Omega^1(x_p,x_q)$.}
\]
\end{proposition}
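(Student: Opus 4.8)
\emph{Plan of proof.} The guiding idea is that the action $f$ in (\ref{act}) depends on the coordinates $(y,t)$ only through their derivatives $\dot y,\dot t$, since the coefficients $A,B,C$ in (\ref{metric}) are functions of $x$ alone. Consequently, for a fixed curve $x\in\Omega^1(x_p,x_q)$ the partial functional $(y,t)\mapsto f(x,y,t)$ on $W(y_p,y_q)\times W(t_p,t_q)$ is affine--quadratic, so its (unique) critical point can be written down explicitly. I would first solve this partial problem, producing a section $x\mapsto(\phi_y(x),\phi_t(x))$, then define $\J$ as the restriction of $f$ to that section, and finally relate the critical points of $f$ and of $\J$ by the chain rule. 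This is exactly the reduction yielding (\ref{newfunc}) in the static case, now performed in the two Killing directions $\partial_y,\partial_t$ at once.

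\emph{Solving the $(y,t)$ problem.} Writing $f(x,y,t)=\frac12\int_0^1[\langle\dot x,\dot x\rangle_R+A(x)\dot y^2+2B(x)\dot y\dot t-C(x)\dot t^2]\,{\rm d}s$ (the normalization of (\ref{newfunct1})) and taking first variations in the $y$ and $t$ directions with endpoint--fixed test functions yields the Euler--Lagrange equations $A(x)\dot y+B(x)\dot t=c_y$ and $B(x)\dot y-C(x)\dot t=c_t$ for constants $c_y,c_t$; these are precisely the conservation laws stating that $\langle\partial_y,\dot z\rangle_L$ and $\langle\partial_t,\dot z\rangle_L$ are constant along $z$, i.e.\ the Killing character of $\partial_y,\partial_t$. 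The $2\times2$ system for $(\dot y,\dot t)$ has determinant $-(B^2+AC)=-H(x)\ne0$ by (\ref{stima1}), so it inverts to express $\dot y,\dot t$ as explicit linear combinations of $c_y,c_t$ with coefficients $C/H,B/H,A/H$. Integrating on $[0,s]$ and imposing $y(1)-y(0)=\Delta_y$, $t(1)-t(0)=\Delta_t$ produces a second $2\times2$ system for $(c_y,c_t)$ whose coefficients are exactly $a(x),b(x),c(x)$ of (\ref{cdotrho}) and whose determinant is $-(b^2+ac)=-\elle(x)$, invertible since $|\elle(x)|>0$. Solving for $c_y,c_t$ and substituting back reproduces termwise the formulas defining $\phi_y$ and $\phi_t$; hence $(\phi_y(x),\phi_t(x))$ is the unique critical point of $f(x,\cdot,\cdot)$, and $\partial_y f,\partial_t f$ vanish there.

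\emph{The equivalence and the closed form.} Setting $\J(x):=f(x,\phi_y(x),\phi_t(x))$ and differentiating by the chain rule gives $\J'(x)[\xi]=\partial_x f[\xi]+\partial_y f[\phi_y'(x)\xi]+\partial_t f[\phi_t'(x)\xi]$, using the (granted) $C^1$ regularity of $\phi_y,\phi_t$. Since $\partial_y f$ and $\partial_t f$ vanish at $(\phi_y(x),\phi_t(x))$, the last two terms drop and $\J'(x)=\partial_x f(x,\phi_y(x),\phi_t(x))$. From this identity both implications follow: if $\bar z=(\bar x,\bar y,\bar t)$ is critical for $f$, then $\partial_y f=\partial_t f=0$ forces $(\bar y,\bar t)=(\phi_y(\bar x),\phi_t(\bar x))$ by the uniqueness above, while $\partial_x f=0$ gives $\J'(\bar x)=0$; conversely, if $\J'(\bar x)=0$ and $(\bar y,\bar t)=(\phi_y(\bar x),\phi_t(\bar x))$, then $\partial_y f=\partial_t f=0$ by construction and $\partial_x f=\J'(\bar x)=0$, so every partial derivative of $f$ vanishes and $\bar z$ is critical. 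To obtain (\ref{newfunct1}) I would substitute $(\dot\phi_y,\dot\phi_t)$ into $f$: by the conservation laws the indefinite part collapses, $A\dot\phi_y^2+2B\dot\phi_y\dot\phi_t-C\dot\phi_t^2=c_y\dot\phi_y+c_t\dot\phi_t$, whose integral is $c_y\Delta_y+c_t\Delta_t$; inserting the solved values of $c_y,c_t$ gives $(\Delta_y^2a+2\Delta_y\Delta_t b-\Delta_t^2c)/\elle$, i.e.\ the second term of $\J$.

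\emph{Main obstacle.} The genuinely structural points are the two nondegeneracy conditions: $H>0$ makes the pointwise $(\dot y,\dot t)$ system solvable, and $|\elle(x)|>0$ makes the integrated system for the momenta solvable; if either fails the reduction breaks down. Everything else is bookkeeping, the only real care being the chain--rule argument on the Hilbert manifold, which rests on the $C^1$ dependence of $\phi_y,\phi_t$ on $x$ and on the fact that the critical point of $f(x,\cdot,\cdot)$ in the fibre is \emph{unique} rather than merely stationary.
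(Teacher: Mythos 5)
Your proposal is correct and takes essentially the same route as the proof the paper relies on (the paper itself gives no proof, deferring to \cite{CS}): fiberwise reduction along the two Killing directions via the conservation laws, inversion of the pointwise $2\times 2$ system (nondegenerate since $H(x)>0$) and of the integrated system for the momenta (nondegenerate since $|\elle(x)|>0$), followed by the chain-rule identity $\J'(x)=\partial_x f(x,\phi_y(x),\phi_t(x))$. The only discrepancy is the factor $\tfrac12$ normalization of $f$ in (\ref{act}) versus (\ref{newfunct1}), which you correctly flag and which affects neither the critical-point equivalence nor the substance of the ``furthermore'' identity.
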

Thus, the geodesic connectedness problem in the standard static
and $(GTS)$ cases reduces to give conditions on the functionals $J$
in (\ref{newfunc}) and $\J$ in (\ref{newfunct1}), resp., which
allows us to apply the classical critical point theorem below
(see \cite[Theorem 2.7]{rab}).

\begin{theorem}\label{min}
Assume that $\Omega$ is a complete Riemannian manifold
and $F$ is a $C^1$ functional on $\Omega$ which
satisfies the Palais--Smale condition, i.e. any sequence $(x_k)_k
\subset \Omega$ such that
\[
(F(x_k))_k\; \mbox{is bounded}\quad\hbox{and}\quad \lim_{k \to
+\infty}F'(x_k) = 0,
\]
converges in $\Omega$, up to subsequences. Then, if $F$
is bounded from below, it attains its infimum.
\end{theorem}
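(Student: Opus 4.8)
The plan is to build a Palais--Smale sequence located at the infimum level of $F$ and then let the compactness hypothesis do the work. Concretely, I would first set $\mu := \inf_\Omega F$, which is a finite real number precisely because $F$ is bounded from below. By the definition of infimum there is a minimizing sequence: for each $k \in \N$ one can pick $u_k \in \Omega$ with $F(u_k) \le \mu + 1/k$. This by itself is not enough to invoke the hypothesis, since a bare minimizing sequence carries no control on $F'(u_k)$, so the Palais--Smale condition is not directly applicable.

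The crucial step is therefore to upgrade $(u_k)_k$ into a sequence that is almost critical as well as almost minimizing. For this I would apply Ekeland's variational principle on the complete metric space $(\Omega,d)$, where $d$ denotes the Riemannian distance (complete by hypothesis). Taking $\eps_k = 1/k$ and the perturbation parameter $\lambda_k = \sqrt{\eps_k}$, the principle produces, for each $k$, a point $x_k \in \Omega$ such that $F(x_k) \le F(u_k)$, $d(x_k,u_k) \le \sqrt{\eps_k}$, and $F(w) \ge F(x_k) - \sqrt{\eps_k}\, d(x_k,w)$ for every $w \in \Omega$. The first estimate already gives $F(x_k) \to \mu$; the slope inequality is what encodes near-criticality.

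It remains to convert that metric slope bound into a bound on the differential. Here I would exploit the $C^1$ structure: fixing a unit tangent vector $\xi$ at $x_k$ and a geodesic $\gamma$ with $\gamma(0)=x_k$, $\dot\gamma(0)=\xi$, the local length-minimizing property $d(x_k,\gamma(s))=s$ for small $s>0$ turns the slope inequality into $F(\gamma(s)) - F(x_k) \ge -\sqrt{\eps_k}\, s$; dividing by $s$ and letting $s \to 0^+$ yields $\langle F'(x_k),\xi\rangle \ge -\sqrt{\eps_k}$ for every unit $\xi$, whence $\|F'(x_k)\| \le \sqrt{\eps_k} \to 0$. Thus $(x_k)_k$ is a Palais--Smale sequence at level $\mu$.

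Finally, the Palais--Smale condition furnishes a subsequence of $(x_k)_k$ converging in $\Omega$ to some $x_\ast$, and continuity of $F$ gives $F(x_\ast) = \lim_k F(x_k) = \mu$, so the infimum is attained at $x_\ast$, as claimed. I expect the only delicate point to be exactly this last conversion: passing from the purely metric slope estimate supplied by Ekeland's principle to the genuine bound $\|F'(x_k)\| \le \sqrt{\eps_k}$, which is where the smoothness of $F$ and the local geometry of geodesics on $\Omega$ are essential.
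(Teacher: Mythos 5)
Your proof is correct, but it does not follow the paper's route, because the paper has no proof of this statement at all: Theorem \ref{min} is quoted as a classical result with a citation to \cite[Theorem 2.7]{rab}, where it is derived from the deformation machinery --- if the level $c=\inf F$ carried no critical point, a pseudo-gradient flow would deform the nonempty sublevel set $\{F\le c+\eps\}$ into $\{F\le c-\eps\}=\emptyset$, a contradiction. Your argument via Ekeland's variational principle is a genuinely different and in some respects better-suited path: Ekeland requires only a complete metric space and a lower semicontinuous functional bounded from below, so it applies verbatim to the infinite-dimensional Hilbert manifolds $\Omega^1(x_p,x_q)$ on which the paper actually invokes the theorem, with no need to build pseudo-gradient vector fields or flows on a manifold; the price is that you must convert the metric slope estimate into a bound on $\|F'(x_k)\|$, which the deformation approach never confronts. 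On that step, note that your worry is unfounded and your argument can even be simplified: you do not need $d(x_k,\gamma(s))=s$, i.e.\ local minimality of geodesics. Since the Riemannian distance is an infimum of lengths of curves, \emph{any} unit-speed $C^1$ curve $\gamma$ with $\gamma(0)=x_k$, $\dot\gamma(0)=\xi$ satisfies $d(x_k,\gamma(s))\le s$, and this inequality goes in exactly the direction required, namely $F(\gamma(s))\ge F(x_k)-\sqrt{\eps_k}\,d(x_k,\gamma(s))\ge F(x_k)-\sqrt{\eps_k}\,s$; dividing by $s$ and letting $s\to 0^+$ gives $\langle F'(x_k),\xi\rangle\ge -\sqrt{\eps_k}$ for every unit $\xi$, hence $\|F'(x_k)\|\le\sqrt{\eps_k}$, with no appeal to the exponential map or to the Gauss lemma. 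With that simplification the rest of your proof (extraction of a convergent subsequence by the Palais--Smale condition and continuity of $F$ at the limit) is complete and correct.
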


\begin{remark}
{\rm In order to obtain a multiplicity result on geodesics joining
two fixed points in standard static spacetimes or $(GTS)$, the
Ljusternik--Schnirelman theory can be applied to $J$ in
(\ref{newfunc}) or $\J$ in (\ref{newfunct1}) whenever the
Riemannian part has a ``rich topology'' (for the static case see
\cite{bcfs} and references therein, for $(GTS)$ see \cite{BCF, CS,
CS1}).}
\end{remark}

In order to avoid technicalities, hereafter we assume that $\m_0$
is complete, so that $\Omega^{1}(x_p,x_q)$ is also complete for
each $x_p,x_q\in\m_0$. Then, let us recall the following result (
cf. \cite[Proposition 4.3]{bcfs} and \cite[Lemma 5.3]{BCF}).

\begin{lemma}\label{ps1}
Let $x_p$, $x_q$ be two points of a given complete Riemannian manifold
$(\m_{0},\langle\cdot,\cdot \rangle_R)$. 
Then, the following
statements hold:
\begin{itemize}
\item[$(a)$]  if $\m= \m_0\times \R$ is a static Lorentzian manifold and
$J$ in (\ref{newfunc}) is coercive on $\Omega^1(x_p,x_q)$, then
$J$ satisfies the Palais--Smale condition on $\Omega^1(x_p,x_q)$;
\item[$(b)$] if $\m= \m_0\times \R^2$ is a $(GTS)$,
 $\J$ in (\ref{newfunct1}) is coercive on $\Omega^1(x_p,x_q)$, and
there exists $\nu > 0$ such that
\[
|\elle(x)| \ \ge \nu\quad \hbox{for all $x \in
\Omega^1(x_p,x_q)$,}
\]
then $\J$ satisfies the Palais--Smale condition on
$\Omega^1(x_p,x_q)$.
\end{itemize}

\end{lemma}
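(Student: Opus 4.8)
The plan is to prove both statements by the same scheme, writing the functional as a sum $F = E + V$, where $E(x) = \frac 12\int_0^1 \langle\dot x,\dot x\rangle_R\,{\rm d}s$ is the Riemannian energy and $V$ collects the remaining ``potential'' term (the $\Delta_t$--contribution in (\ref{newfunc}) for part $(a)$, the quotient in (\ref{newfunct1}) for part $(b)$). The decisive structural feature is that $V$ depends on the curve $x$ only through its pointwise values $x(s)$ and through the scalar fields $\beta$ (resp. $A,B,C$) evaluated along $x$, never through $\dot x$; hence $V$ and its differential will be \emph{weakly continuous}, while $E$ alone carries the full $H^1$--coercivity.

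First I would fix a Palais--Smale sequence $(x_k)_k\subset\Omega^1(x_p,x_q)$, so that $(F(x_k))_k$ is bounded and $F'(x_k)\to 0$. Coercivity of $F$ together with boundedness of $(F(x_k))_k$ forces $(x_k)_k$ to be bounded in the $H^1$--norm. Embedding $(\m_0,\langle\cdot,\cdot\rangle_R)$ isometrically in some $\R^N$ (Nash) and using that $\m_0$ is complete, hence $\Omega^1(x_p,x_q)$ is complete, the compact inclusion $H^1(I,\R^N)\hookrightarrow C^0(I,\R^N)$ yields a subsequence, still denoted $(x_k)_k$, with $x_k\wk x_\infty$ weakly in $H^1$ and $x_k\to x_\infty$ uniformly, where $x_\infty\in\Omega^1(x_p,x_q)$.

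The core step is to upgrade this to strong $H^1$--convergence. I would test $F'(x_k)$ against the tangential part of $\xi_k := x_k - x_\infty$: since $\|\xi_k\|_{H^1}$ is bounded and $F'(x_k)\to 0$, one has $F'(x_k)[\xi_k]\to 0$. Splitting $F'(x_k)[\xi_k] = \int_0^1\langle\dot x_k,\dot\xi_k\rangle_R\,{\rm d}s + V'(x_k)[\xi_k]$, up to the lower--order terms produced by projecting onto $T\m_0$, the potential contribution $V'(x_k)[\xi_k]$ tends to $0$ because $V'(x_k)$ is represented by an $L^1$--function depending only on $x_k$ and on $\beta$ (resp. $A,B,C$), with no derivative of $\xi_k$, so it is continuous under the uniform convergence $x_k\to x_\infty$ while $\xi_k\to 0$ uniformly. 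Combining with $\int_0^1\langle\dot x_\infty,\dot\xi_k\rangle_R\,{\rm d}s\to 0$ (weak convergence) gives $\int_0^1\langle\dot x_k-\dot x_\infty,\dot x_k-\dot x_\infty\rangle_R\,{\rm d}s\to 0$, that is $x_k\to x_\infty$ strongly in $H^1$.

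The main obstacle is the uniform control of the potential differential $V'(x_k)[\xi_k]$. In case $(a)$ this is routine once one observes that $\beta>0$ on $\m_0$, so that along the bounded, uniformly convergent sequence, whose image is relatively compact in the complete $\m_0$, the factor $(\int_0^1 \beta(x_k)^{-1}\,{\rm d}s)^{-1}$ and its variation stay bounded. In case $(b)$ this is exactly where the hypothesis $|\elle(x)|\ge\nu>0$ is indispensable: it prevents the denominator $\elle(x_k)$ in (\ref{newfunct1}) from degenerating, so that $V$, together with the coefficients $a,b,c$ in (\ref{cdotrho})--(\ref{cdotrho1}) (finite and $C^0$--continuous thanks to $H>0$ and the $C^1$ regularity of $A,B,C$), has a differential that is uniformly bounded and weakly continuous along the sequence. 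With that bound in hand the limiting argument above closes both cases simultaneously.
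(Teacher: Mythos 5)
Your proposal is correct and follows essentially the same route as the paper's source for this statement: the paper does not prove Lemma \ref{ps1} itself but recalls it from \cite[Proposition 4.3]{bcfs} and \cite[Lemma 5.3]{BCF}, whose proofs use exactly your scheme --- coercivity forces $H^1$--boundedness of a Palais--Smale sequence, a subsequence converges weakly in $H^1$ and uniformly to a limit in $\Omega^1(x_p,x_q)$ (legitimized by completeness of $\m_0$ and the relative compactness of the images), and strong convergence is then recovered by testing the differential against the tangentially projected difference, the potential term being harmless because its differential involves no derivative of the variation. Your identification of where each hypothesis enters is also the right one: in case $(a)$ positivity of $\beta$ on the compact image set controls the denominator, while in case $(b)$ the bound $|\elle(x)|\ge\nu>0$ is precisely what keeps the denominator of (\ref{newfunct1}) nondegenerate along the sequence.
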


Summing up, geodesic connectedness of the mentioned spacetimes is
guaranteed by conditions implying the coercivity and lower
boundedness of the ``Riemannian'' functional associated to the problem.

For instance, in the case of $J$ in (\ref{newfunc}), these
conditions correspond to restrictions on the growth of the
(positive) metric coefficient $\beta$ in (\ref{metrica}): $\beta$
bounded in the pioneer paper \cite{BFG} or, more in general, 
$\beta$ subquadratic or growing at most
quadratically with respect to the distance $d(\cdot,\cdot)$
induced on $\m_0$ by its Riemannian metric
$\langle\cdot,\cdot\rangle_{R}$, i.e. existence of $\lambda \ge
0$, $k \in \R$ and a point $x_0\in \m_0$ such that
\begin{equation}\label{crescita}
0 < \beta(x) \le \lambda d^2(x, x_0) + k \quad \mbox{for all $x \in \m_0$,}
\end{equation}
(cf. \cite[Theorem 1.1]{bcfs} and references therein). Remarkably,
this second growth condition on $\beta$ is optimal, as showed in
\cite[Section 7]{bcfs} by constructing a family of geodesically
disconnected static spacetimes with superquadratic, but
arbitrarily close to quadratic, coefficients $\beta$.

\section{Geodesic Connectedness in $(GTS)$}\label{geo}

At a first glance the problem in $(GTS)$ can be handled in the same
manner as in the static case. However, we cannot expect optimality
by applying this variational approach. In fact, the classical
G\"odel Universe cannot be studied by our tools, due to the lack
of the assumption $\elle(x)\not=0$ on $\Omega^1(x_p,x_q)$ for each
couple of points $x_p,x_q\in\R^2$ (cf. Example \ref{esem}$(e_1)$).
In this section we state and prove a new theorem on geodesic
connectedness for $(GTS)$ (in addition to the previous ones in
\cite{BCF,CS}), which, even if is not optimal, is accurate
in the sense described below (see Corollary \ref{c2} and Example
\ref{ex}).

\begin{theorem}\label{t1}
Let $(\m= \m_0 \times \R^2,\langle\cdot,\cdot\rangle_L)$ be a
G\"odel type spacetime such that:
\begin{itemize}
\item[$(h_1)$] $\;\;$
\ $(\m_{0},\langle\cdot,\cdot\rangle_{R})$ is a complete
Riemannian manifold;
\item[$(h_2)$] $\;\;$
\ there exists $\nu>0$ such that
${\mathcal L}(x)\geq\nu>0$ for all $x\in H^{1}(I,\m_{0})$;
\item[$(h_3)$] $\;\;$
\ $m(x)\geq h(x) > 0$ for all $x\in H^{1}(I,\m_{0})$,
with $m(x):={\rm max}\{a(x),-c(x)\}$ and
\[
h(x):=\displaystyle{\int_0^1\frac{{\rm d}s}{\lambda
d^{2}(x(s),x_0)+k}}\qquad \hbox{for some $\lambda \ge 0$, $k\in\R$
and $x_0\in \m_0$.}
\]
\end{itemize}
Then, $(\m,\langle\cdot,\cdot\rangle_L)$ is geodesically
connected.
\end{theorem}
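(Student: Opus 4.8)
The plan is to deduce geodesic connectedness from the existence of a minimizer of the reduced functional $\J$ in (\ref{newfunct1}). First I would invoke Proposition \ref{principe}: hypothesis $(h_2)$ guarantees $\elle(x)\geq\nu>0$ on every $\Omega^1(x_p,x_q)$, so critical points of $f$ on $\Omega^1(z_p,z_q)$ correspond exactly to critical points of $\J$ on $\Omega^1(x_p,x_q)$, with the remaining components recovered through $\phi_y,\phi_t$. By $(h_1)$ the manifold $\Omega^1(x_p,x_q)$ is complete, and $(h_2)$ supplies the bound $|\elle|\geq\nu$ required in Lemma \ref{ps1}$(b)$. Hence, once I show that $\J$ is coercive on $\Omega^1(x_p,x_q)$ (which also makes it bounded from below), Lemma \ref{ps1}$(b)$ yields the Palais--Smale condition and Theorem \ref{min} produces a minimizer, i.e. a geodesic joining $z_p$ to $z_q$. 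Since $(h_1)$--$(h_3)$ do not involve the endpoints, this would hold for every pair, giving connectedness. Everything thus reduces to coercivity of $\J$ for arbitrary $x_p,x_q$ and arbitrary $\Delta_y,\Delta_t$.

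Writing $\J(x)=\frac12\int_0^1\langle\dot x,\dot x\rangle_R\,{\rm d}s+P(x)$ with $P(x)=(\Delta_y^2 a(x)+2\Delta_y\Delta_t b(x)-\Delta_t^2 c(x))/(2\elle(x))$, the next step is an algebraic lower bound for the potential term $P$. Its numerator is an indefinite quadratic form in $(\Delta_y,\Delta_t)$ of determinant $-\elle(x)<0$, so $P$ has no sign; however, completing the square cancels the cross term $b$ against $\elle$. If $m(x)=a(x)$ I complete in $\Delta_y$ and obtain $P(x)\geq-\Delta_t^2/(2a(x))$, while if $m(x)=-c(x)$ I complete in $\Delta_t$ and obtain $P(x)\geq-\Delta_y^2/(2(-c(x)))$. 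In either case $(h_3)$ gives $m(x)\geq h(x)>0$, so $P(x)\geq-C_0/(2h(x))$ with $C_0:=\max\{\Delta_y^2,\Delta_t^2\}$, whence $\J(x)\geq\frac12\int_0^1\langle\dot x,\dot x\rangle_R\,{\rm d}s-C_0/(2h(x))$. Coercivity is therefore equivalent to showing that $1/h(x)$ is dominated by the kinetic energy as $T:=(\int_0^1\langle\dot x,\dot x\rangle_R\,{\rm d}s)^{1/2}\to+\infty$.

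This last estimate is the heart of the matter. Using $d(x(s),x_0)\leq A_0+\int_0^s|\dot x|\,{\rm d}\sigma=:G(s)$, with $A_0=d(x_p,x_0)$, one has $h(x)\geq\int_0^1{\rm d}s/(\lambda G^2+k)$, and I would control this from below by two complementary inequalities: the pointwise bound $\int_0^1{\rm d}s/(\lambda G^2+k)\geq 1/(\lambda G(1)^2+k)$ and the Cauchy--Schwarz bound $\int_0^1{\rm d}s/(\lambda G^2+k)\geq T^{-2}(\int_{A_0}^{G(1)}{\rm d}r/\sqrt{\lambda r^2+k})^2$. The difficulty is that neither alone suffices for all endpoints: the first gives $1/h(x)\lesssim\lambda T^2$, with a constant too large unless $C_0\lambda$ is small, and the second is wasted when $G(1)$ stays bounded. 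The remedy is to split according to a threshold $\delta$: for $G(1)\leq\delta T$ use the pointwise bound, getting $1/h(x)\leq\lambda\delta^2 T^2+k$; for $G(1)>\delta T$ use Cauchy--Schwarz, where the logarithmic growth of $\int_{A_0}^{G(1)}{\rm d}r/\sqrt{\lambda r^2+k}$ forces $1/h(x)\lesssim T^2/(\ln T)^2=o(T^2)$. Choosing $\delta$ with $C_0\lambda\delta^2<1$ makes $\frac12 T^2$ dominate the subtracted term in both regimes, so $\J(x)\to+\infty$. I expect this interplay between the harmonic-mean quantity $1/h(x)$ and the kinetic energy --- ensuring domination for every value of $C_0$, in the spirit of the optimal at-most-quadratic static estimate of \cite{bcfs} --- to be the main obstacle; the degenerate case $\lambda=0$ is immediate, since then $h(x)\equiv 1/k$.
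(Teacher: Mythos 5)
Your proposal is correct, and its skeleton (reduction via Proposition \ref{principe}, Palais--Smale via Lemma \ref{ps1}$(b)$, minimization via Theorem \ref{min}) coincides with the paper's; the two technical steps, however, are carried out by genuinely different means. For the algebraic lower bound on the potential term, the paper rewrites $\J$ in the diagonalized form (\ref{nuovoj}) borrowed from \cite{BCF}, discards the nonnegative term $-\Delta_{+}^{2}(x)/(2\lambda_{-}(x))$, and estimates the positive eigenvalue by $\lambda_{+}(x)\geq \frac{a(x)-c(x)+|a(x)+c(x)|}{2}=m(x)$; you instead complete the square in $\Delta_y$ (when $m=a$) or in $\Delta_t$ (when $m=-c$), using $\elle(x)>0$ to discard the squared term, which yields the same type of bound $\J(x)\geq\frac12\|\dot x\|^{2}-\mathrm{const}\cdot\left(h(x)\right)^{-1}$ with no eigenvalue computation at all (your constant $\max\{\Delta_y^2,\Delta_t^2\}$ is even marginally sharper than the paper's $\Delta_-^2(x)\le\Delta_y^2+\Delta_t^2$). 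For the coercivity, the paper simply invokes the optimal static result \cite[Theorem 1.1]{bcfs}, whose functional (\ref{newfunc}) under the growth condition (\ref{crescita}) has exactly this structure; you prove the estimate from scratch via the threshold splitting $G(1)\lessgtr\delta T$, combining the pointwise bound with the Cauchy--Schwarz/logarithmic bound. This essentially reproduces the core argument of \cite{bcfs}, so your proof is more self-contained (and makes transparent why at-most-quadratic growth is the right hypothesis), while the paper's is shorter and makes explicit that the $(GTS)$ problem is being reduced to the known static one. The only point worth polishing in yours: Theorem \ref{min} needs boundedness from below, which does not follow formally from coercivity alone in this infinite-dimensional setting, but it does follow from your own estimates, since for $\|\dot x\|\le R$ the pointwise bound gives $\J(x)\ge -\frac{C_0}{2}\bigl(\lambda(d(x_p,x_0)+R)^{2}+k\bigr)$.
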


\begin{proof}
Let us take any $z_{p}=(x_p,y_p,t_p)$, $z_q=(x_q,y_q,t_q)\in\m$, with
$x_p$, $x_q\in \m_{0}$ and $(y_p,t_p)$, $(y_q,t_q)\in \R^{2}$.
From hypothesis $(h_2)$ (in particular $\elle(x)\not=0$),
Proposition \ref{principe} can be applied, and so the existence
of geodesics joining $z_p$ to $z_q$ reduces to find some
critical points of $\J$ in (\ref{newfunct1}) on
$\Omega^1(x_p,x_q)$. Following the arguments developed in
\cite[Section 5]{BCF}, we have that $\J$ can be written as
follows:
\begin{equation}\label{nuovoj}
{\J}(x)=\frac{1}{2}\|\dot{x}\|^{2}-\frac{1}{2}\frac{\Delta_{+}^{2}(x)}{\lambda_{-}(x)}
-\frac{1}{2}\frac{\Delta_{-}^{2}(x)}{\lambda_{+}(x)},
\end{equation}
where
\[
\|\dot{x}\|^{2} = \int_0^1 \langle\dot x,\dot x\rangle_R\; {\rm d}s,\quad
\lambda_{\pm}(x)=\frac{a(x)-c(x)\pm\sqrt{(a(x)+c(x))^{2}+4b(x)^{2}}}{2}
\]
and $\Delta_\pm(x)$ are suitable real maps depending also on
$\Delta_y$, $\Delta_t$ (see (\ref{cdotrho}) and \cite[p.
784]{BCF}). Since $\elle(x)= -\lambda_-(x)\lambda_+(x)$,
necessarily $\lambda_{+}(x)>0>\lambda_{-}(x)$ for all $x\in
\Omega^{1}(x_p,x_q)$, and thus
\[
{\J}(x)\geq
\frac{1}{2}\|\dot{x}\|^{2}-\frac{1}{2}\frac{\Delta_{-}^{2}(x)}{\lambda_{+}(x)}.
\]
Note also that, by the definition of $m(x)$ in $(h_3)$, we get
\[
\lambda_{+}(x)\geq \frac{a(x)-c(x)+|a(x)+c(x)|}{2}= m(x)>0.
\]
Hence, $(h_3)$ implies
\[
{\J}(x)\geq
\frac{1}{2}\|\dot{x}\|^{2}-\frac{\Delta_{-}^{2}(x)}{2m(x)}\geq\frac{1}{2}\|\dot{x}\|^{2}-\frac{\Delta_{-}^{2}(x)}{2}\left(h(x)\right)^{-1} \quad
\hbox{for all $x\in \Omega^{1}(x_p,x_q)$.}
\]
So, from \cite[Theorem 1.1]{bcfs}, it follows that ${\J}$ is
bounded from below and coercive (cf. (\ref{newfunc}) and
(\ref{crescita})). Furthermore, by $(h_2)$ and Lemma
\ref{ps1}$(b)$, the functional ${\J}$ satisfies the
Palais--Smale condition. Hence, Theorem \ref{min} can be applied,
and a geodesic connecting $z_{p}$ with $z_{q}$ is obtained. As
$z_p$, $z_q$ are arbitrary, the thesis follows. 
\end{proof}

An immediate consequence of Theorem \ref{t1} is
the following result concerning some standard stationary
spacetimes (cf. Example \ref{esem}$(e_4)$).
\begin{corollary}\label{c2} Let $(\m= \m_0 \times
\R^2,\langle\cdot,\cdot\rangle_{L})$ be a standard stationary
spacetime with
$\langle\cdot,\cdot\rangle_{L}=\langle\cdot,\cdot\rangle_{R}+dy^{2}+2\delta(x)
dy\, dt-\beta (x) dt^{2}$, where $\delta$, $\beta :\m_0 \to\R$, $\beta(x) > 0$ in $\m_0$. 
Assume also that
\begin{itemize}
\item[$(s_1)$] $\;\;$
$(\m_{0},\langle\cdot,\cdot\rangle_{R})$ is a complete
Riemannian manifold;
\item[$(s_2)$] $\;\;$ there exist
$\lambda_{1},\lambda_{2}\geq 0$, $k_{1},k_{2}\in \R$ and a point
$x_0\in\m_{0}$ such that
\[
\beta(x)\leq \lambda_1 d^{2}(x,x_0)+k_1,\quad \delta(x)\leq
\lambda_2 d(x,x_0)+k_2\qquad \hbox{for all $x\in \m_{0}$}.
\]
\end{itemize}
Then, $(\m= \m_0 \times \R^2,\langle\cdot,\cdot\rangle_{L})$ is
geodesically connected.
\end{corollary}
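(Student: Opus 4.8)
The plan is to read the metric as a $(GTS)$ (Definition \ref{type}) with $A\equiv 1$, $B=\delta$, $C=\beta$, and then to apply Theorem \ref{t1}. Since $\beta>0$ we have $H(x)=\delta^2(x)+\beta(x)>0$, so (\ref{stima1}) holds and we do have a $(GTS)$; hypothesis $(h_1)$ is nothing but $(s_1)$. With these coefficients the reduced quantities (\ref{cdotrho}) read $a(x)=\int_0^1 H(x)^{-1}\,\mathrm ds$, $b(x)=\int_0^1 \delta(x)H(x)^{-1}\,\mathrm ds$ and $c(x)=\int_0^1 \beta(x)H(x)^{-1}\,\mathrm ds$, so that $a(x)>0$ and $c(x)>0$ for every $x\in H^1(I,\m_0)$.

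First I would settle $(h_3)$. As $c(x)>0$ gives $-c(x)<0<a(x)$, one has $m(x)=\max\{a(x),-c(x)\}=a(x)$, so $(h_3)$ amounts to the single inequality $a(x)\ge h(x)$. This follows from a pointwise quadratic bound on $H$: reading $(s_2)$ as control of $\beta$ and of $|\delta|$ (hence of $\delta^2$, after absorbing the linear term by Young's inequality) yields constants $\lambda\ge 0$, $k\in\R$ with $H(x)\le \lambda\, d^2(x,x_0)+k$ on $\m_0$, and therefore $a(x)=\int_0^1 H(x)^{-1}\,\mathrm ds\ge \int_0^1 (\lambda d^2(x(s),x_0)+k)^{-1}\,\mathrm ds=h(x)$. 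Granting this, the lower boundedness and coercivity of $\J$ go through exactly as in the proof of Theorem \ref{t1}: from (\ref{nuovoj}) and $\lambda_+(x)\ge m(x)=a(x)$, together with the elementary bound $\Delta_-^2(x)\le \Delta_y^2+\Delta_t^2$ (the $\Delta_\pm$ being the components of $(\Delta_y,\Delta_t)$ in the orthonormal eigenbasis associated with $\lambda_\pm$), one obtains $\J(x)\ge \tfrac12\|\dot x\|^2-\tfrac12(\Delta_y^2+\Delta_t^2)\,h(x)^{-1}$, and \cite[Theorem 1.1]{bcfs} applies.

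The delicate point is $(h_2)$. Here $a,c>0$ already force $\elle(x)=b^2(x)+a(x)c(x)>0$ for every curve, so Proposition \ref{principe} is available with no extra hypothesis; what is not clear is the \emph{uniform} bound $\elle\ge\nu$. Indeed, when $\beta$ is unbounded such a global bound is false: on a constant curve placed where $\beta$ is large one has $a\to 0$, $b\to 0$, $c\to 1$, so $\elle\to 0$. I would therefore not invoke the global form of Lemma \ref{ps1}$(b)$ but verify the Palais--Smale condition directly. By the coercivity just established, any Palais--Smale sequence for $\J$ is bounded in $H^1(I,\m_0)$; since its endpoints are fixed, it remains in a bounded, hence (by $(s_1)$) relatively compact, subset of $\m_0$, on which $H$ is bounded and $\elle$ is bounded below by a positive constant. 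On such a confined sequence the compactness argument of Lemma \ref{ps1}$(b)$ runs unchanged. Finally Theorem \ref{min} provides a minimiser of $\J$, which Proposition \ref{principe} turns into a geodesic from $z_p$ to $z_q$; as $z_p,z_q$ are arbitrary, geodesic connectedness follows.
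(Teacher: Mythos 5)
Your proposal is correct, and it is actually \emph{more} careful than the paper's own proof, which is a single line: read the metric as a $(GTS)$ with $A\equiv 1$, $B=\delta$, $C=\beta$ (the same reduction you make) and invoke Theorem \ref{t1}. The objection you raise about $(h_2)$ is genuine: under $(s_1)$--$(s_2)$ alone the uniform bound $\elle(x)\ge\nu>0$ on $H^1(I,\m_0)$ does \emph{not} hold in general. On a constant curve $x\equiv p$ one has $\elle(x)=1/H(p)=1/(\delta^2(p)+\beta(p))$, which tends to $0$ whenever $\beta$ (or $|\delta|$) is unbounded; the same failure occurs even on $\Omega^1(x_p,x_q)$, for curves spending most of their parameter time far from $x_0$, so one cannot save the argument by appealing only to Lemma \ref{ps1}$(b)$ on connecting curves. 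Note that this is exactly the situation of the paper's own Example \ref{ex} with $\epsilon=0$, where $\beta_0(x)=1+|x|^2$ is unbounded; thus the paper's deduction of Corollary \ref{c2} from Theorem \ref{t1} is incomplete as written (the statement itself is safe, being, as the paper remarks, a particular case of \cite[Theorem 1.2]{bcf1}). Your repair is the natural one and it works: since $a(x)>0$ and $c(x)>0$ for every curve, $\elle(x)=b^2(x)+a(x)c(x)>0$ always, so Proposition \ref{principe} needs no uniform bound; coercivity and lower boundedness follow from $\lambda_+(x)\ge m(x)=a(x)\ge h(x)$ together with your (correct, and in the paper only implicit) observation that $\Delta_+^2(x)+\Delta_-^2(x)=\Delta_y^2+\Delta_t^2$ by orthogonality of the eigenbasis; and the Palais--Smale condition is recovered by your localization: coercivity plus fixed endpoints confine any PS sequence, via Hopf--Rinow and $(s_1)$, to a compact subset of $\m_0$, on which $H$ is bounded above and hence $\elle$ is uniformly positive along the sequence, so the compactness argument behind Lemma \ref{ps1}$(b)$ (\cite[Lemma 5.3]{BCF}) applies along that sequence. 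Two minor points: your reading of $(s_2)$ as a bound on $|\delta|$ (rather than on $\delta$) is indeed necessary to get the quadratic control of $H=\delta^2+\beta$, and is the intended hypothesis (in \cite{bcf1} the bound is on the norm of the vector field $\delta$); and, to be fully complete, one should check explicitly that the proof of \cite[Lemma 5.3]{BCF} uses the lower bound on $\elle$ only along the given PS sequence, which is the case since the Palais--Smale condition is a statement about sequences.
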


\begin{proof}
As the standard stationary spacetime $(\m= \m_0 \times
\R^2,\langle\cdot,\cdot\rangle_{L})$ is a $(GTS)$ with $A(x)\equiv
1$, $B(x)=\delta (x)$ and $C(x)=\beta(x)$, the thesis follows from
Theorem \ref{t1}. 
\end{proof}

Notice that Corollary \ref{c2} is a particular case of
\cite[Theorem 1.2]{bcf1} for general standard stationary manifolds
$\m=\m_0\times\R$ with $\langle\cdot,\cdot\rangle_{L}$ as in
(\ref{metrica}), which proof is based on fine estimates involving
the metric coefficients. The following example shows the accurate
character of this result.
\begin{example}\label{ex}
{\rm 
Let us consider $\R^{3}$ endowed with the
following family of metrics:
\[
\langle\cdot,\cdot\rangle_{L, \varepsilon}=dx^{2}+dy^{2}-\beta_{\epsilon}(x)dt^{2},\quad
\epsilon\geq 0,
\]
where $(x,y,t)\in\R^3$ and  $\beta_{\epsilon}$ is a (positive)
smooth function on $\R$ such that
\[
\left\{\begin{array}{ll} \beta_{\epsilon}(x)=1+|x|^{2+\epsilon} &
\hbox{if $x\in \R\setminus (-1,1)$} \\
\beta_{\epsilon}([-1,1])\subset [1,2]. & \end{array}\right.
\]
By Corollary \ref{c2} (with $\delta\equiv 0$), the spacetime is
geodesically connected if $\epsilon=0$. However, the spacetime is
geodesically disconnected for any (and thus, for arbitrarily close
to zero) strictly positive $\epsilon$ (see \cite[Section
7]{bcfs}). }
\end{example}

In order to give a more precise idea of the known results on
geodesic connectedness in $(GTS)$ by applying variational tools, let
us review the corresponding results in \cite{BCF}. In
\cite[Theorem 4.3]{BCF}, by using the expression (\ref{nuovoj}) of
$\J$, the geodesic connectedness of $(GTS)$ is proven under
assumptions $(h_1)$ and $(h_2)$ in Theorem \ref{t1}, in addition
to the following one:
\begin{itemize}
\item[$(h_3')$] $\;\;$
$A(x)-C(x)>0$ for all $x \in \m_0$
and the (positive) map
$\displaystyle{
\frac{H(x)}{A(x)-C(x)}}$ is at most quadratic.
\end{itemize}
Indeed, these conditions imply that $\J$ is bounded from below and
coercive, which allows us to apply
Theorem \ref{min} in view of Lemma \ref{ps1}$(b)$.

As an immediate application of this result to Kerr--Schild
spacetime (Example \ref{esem}$(e_3)$), observe that here
$A(x)-C(x)= 2V(x)$, $H(x)\equiv 1$ and $\elle(x)\not=0$ on
$H^1(I,\m_0)$. Thus, the geodesic connectedness is ensured if $V$
is strictly positive and $(2V(x))^{-1}$ is at most quadratic.

On the other hand, in \cite[Theorem 4.4]{BCF} we consider the simpler case,
where $\elle(x) \le -\nu < 0$ for all $x \in H^{1}(I,\m_0)$ and
$A(x)-C(x)<0$ for all $x \in \m_0$.

Finally, notice that in \cite{BCF} the growth assumption 
involves only the metric coefficients, and not the integrals in
(\ref{cdotrho}). This contrasts with \cite{CS, CS1}, where, in
order to get the coercivity of $\J$, it is required that
\[
\left|\frac{a(x)} {\elle(x)}\right|,\quad
\left|\frac {b(x)} {\elle(x)}\right|,\quad
 \left|\frac {c(x)} {\elle(x)}\right|
 \quad \hbox{are uniformly bounded on $H^1(I,\m_0)$.}
 \]

\begin{remark} {\rm 
Regarding to the case $A\equiv C$ left over in \cite{BCF}, if $A$
(hence $C$) is always different from zero,
then we are in the stationary case (Example \ref{esem}$(e_4)$) with $\beta(x) = |A(x)|$.\\
In general, if $B\equiv 0$ and $H(x) = A(x)C(x) > 0$ with $A(x) >
0$ and $\beta(x) = C(x)$, then we have Example \ref{esem}$(e_4)$
in the static case. So, $\J(x)\geq J(x)$ on each
$\Omega^{1}(x_p,x_q)$ and the optimal result in \cite[Theorem
1.1]{bcfs} can be used. Let us point out that a direct use of
$(h_3')$ for the particular case
$A\equiv 1$ would give the desired result only for $\beta(x) <1$.\\
If $A\equiv C\equiv 0$ then $\elle(x)=b^2(x)$ and $(GTS)$ becomes
 the more general type of warped product spacetimes, with fiber the
 two dimensional Lorentz--Minkowski spacetime $\L^2$
 (see also \cite{CS,CSS} and references therein).
In this case we deal again with a functional as in
(\ref{newfunc}), and we get global geodesic connectedness for the
class of metrics $\langle\cdot,\cdot\rangle_{R} -
2\delta(x)\,dy\,dt$, where $\delta$
is a positive function with at most a quadratic growth (compare with \cite[Appendix B]{CS}). \\
Moreover, if $a\equiv c$ on $H^{1}(I,\m_0)$, then
\[
\J(x)\geq \frac 12 \|\dot x\|^2- \frac{\Delta_-^2}{|a(x)|}.
\]
Hence, if $A(x) > 0$ in $\m_0$, we obtain geodesic connectedness by assuming that
$H(x)/A(x)$ grows at most qua\-dra\-ti\-cal\-ly in $\m_0$
(cf. (\ref{newfunc}) and (\ref{crescita})).}
\end{remark}

\begin{remark}{\rm 
In \cite{pt} Piccione and Tausk generalize the Morse index theorem
to semi--Riemannian manifolds admitting a smooth distribution
spanned by commuting Killing vectors fields and containing a
maximal negative distribution for the given metric. In paticular,
they obtain Morse relations for standard stationary
semi--Riemannian manifolds and, when the nondegeneracity condition
$|\elle(x)|>0$ holds, for $(GTS)$ (cf. \cite[Theorems 4.6 and
4.8]{pt}). Clearly, Morse relations can be obtained also under our
assumptions.}
\end{remark}

\section{Geodesic Completeness}
\label{sec:1}

In this section we establish and prove a result on geodesic
completeness for $(GTS)$.
\begin{theorem}\label{teocom}
Let $(\m= \m_0 \times \R^2,\langle\cdot,\cdot\rangle_L)$ be a
G\"odel type spacetime such that:
\begin{itemize}
\item[$(c_1)$] $\;\;$ $(\m_0,\langle\cdot,\cdot\rangle_R)$ is a complete
Riemannian manifold;
\item[$(c_2)$] $\;\;$ there exist $\lambda\geq 0$, $k\in \R$ and a point
$x_0\in \m_0$ such that the (positive) map
\[
\mu: x\in\m_0\mapsto
C(x)-A(x)+\sqrt{(A(x)+C(x))^{2}+4B(x)^{2}}\in \R
\]
satisfies
\begin{equation}\label{e4}
1/\mu(x)\leq \lambda d^{2}(x,x_0)+k\quad\hbox{for all
$x\in \m_{0}$.}
\end{equation}
\end{itemize}
Then, $(\m,\langle\cdot,\cdot\rangle_L)$ is geodesically complete.
\end{theorem}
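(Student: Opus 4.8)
The plan is to prove geodesic completeness by showing that every inextendible geodesic is defined on all of $\R$. A geodesic $z(s)=(x(s),y(s),t(s))$ of a $(GTS)$ has two conserved quantities coming from the Killing fields $\partial_y$ and $\partial_t$, namely $\langle\dot z,\partial_y\rangle_L$ and $\langle\dot z,\partial_t\rangle_L$, together with the conserved ``energy'' $\langle\dot z,\dot z\rangle_L=E$. Explicitly these read
\[
A(x)\dot y + B(x)\dot t = c_y,\qquad B(x)\dot y - C(x)\dot t = c_t,
\]
with $c_y,c_t$ constants. Since $H(x)=B^2+AC>0$ by \eqref{stima1}, this linear system can be solved for $\dot y$ and $\dot t$ in terms of $x$ and the constants. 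Substituting back into $\langle\dot z,\dot z\rangle_L=E$ eliminates the fiber variables and yields an equation of the form
\[
\langle\dot x,\dot x\rangle_R = E + P(x),
\]
where $P(x)$ is an explicit function built from $A,B,C,H$ and the constants $c_y,c_t$; the whole problem thus reduces to controlling the base curve $x(s)$ in the complete Riemannian manifold $(\m_0,\langle\cdot,\cdot\rangle_R)$.

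First I would write out the conserved quantities carefully and diagonalize the quadratic form on the fiber, introducing the eigenvalue-type quantities $\lambda_\pm$ analogous to those in \eqref{nuovoj}, so that $P(x)$ splits into a part governed by $\lambda_+(x)$ and a part governed by $\lambda_-(x)$. The function $\mu$ in hypothesis $(c_2)$ is exactly $2$ times the relevant positive combination $C(x)-A(x)+\sqrt{(A(x)+C(x))^2+4B(x)^2}$, i.e. the pointwise analogue of $-2\lambda_-$ (or $2\lambda_+$, up to a sign convention matching the metric), so $(c_2)$ is precisely a bound $1/\mu(x)\le \lambda d^2(x,x_0)+k$ controlling how fast the dangerous term in $P(x)$ can grow. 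The next step is to use this bound to estimate the Riemannian speed $|\dot x|_R$ and show it cannot blow up in finite parameter time.

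Concretely, I would argue by contradiction: suppose a geodesic is defined only on a maximal interval $[0,\omega)$ with $\omega<+\infty$. From the energy identity $|\dot x|_R^2 = E+P(x)$ and the growth control $(c_2)$, I expect a differential inequality of the form $\frac{d}{ds}d(x(s),x_0)\le |\dot x(s)|_R \le \sqrt{C_1 d^2(x(s),x_0)+C_2}$, which by a Gronwall-type comparison forces $d(x(s),x_0)$ to stay bounded on the finite interval $[0,\omega)$. Completeness of $(\m_0,\langle\cdot,\cdot\rangle_R)$ (hypothesis $(c_1)$) then keeps $x(s)$ in a compact set, and the bounded-speed estimate shows $\dot x(s)$, and hence $\dot y(s)$, $\dot t(s)$ via the inverted linear system, also remain bounded as $s\to\omega$. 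Standard ODE continuation then extends the geodesic past $\omega$, contradicting maximality; the same argument applies to the left endpoint, giving completeness.

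The main obstacle I anticipate is the bookkeeping in passing from the at-most-quadratic bound on $1/\mu(x)$ to a genuine quadratic bound on the full potential $P(x)$: one must verify that every term arising from solving for $\dot y,\dot t$ and substituting is dominated by $1/\mu(x)$ (times constants depending on $c_y,c_t,E$), rather than involving $A,B,C$ or $H$ in an uncontrolled way. In particular the term carrying $\lambda_+(x)$ (the ``good'' positive-definite direction) should contribute with the right sign and not spoil the estimate, while the $\lambda_-(x)$ term is the one requiring the hypothesis; confirming that $\mu$ captures exactly the worst-case growth, and that no hidden dependence on the other coefficients survives, is the delicate point. Once the quadratic growth estimate for $|\dot x|_R^2$ is secured, the completeness conclusion follows from the classical finite-speed/complete-manifold argument.
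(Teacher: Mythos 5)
Your proposal is correct and follows essentially the same route as the paper's proof: conservation laws from the Killing fields $\partial_y$, $\partial_t$ and the energy, solving the linear system for $\dot y$, $\dot t$ via $H(x)>0$, diagonalizing the fiber quadratic form so that the negative eigenvalue $\Lambda_-(x)=-\mu(x)/2$ is identified as the only dangerous term (the $\Lambda_+$ term indeed enters with a favorable sign and is discarded), and then a Gronwall-type estimate using $(c_2)$ to bound the $\langle\cdot,\cdot\rangle_R$--length of $x(s)$ on a finite maximal interval, contradicting inextendibility by completeness of $\m_0$ and continuation of geodesics. The paper packages the final continuation step via \cite[Lemma 5.8]{O} and runs Gronwall on the length integral rather than on $d(x(s),x_0)$, but these are cosmetic differences.
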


\begin{proof}
Let $z:[0,T)\rightarrow \m$, $z(s)=(x(s),y(s),t(s))$, be
an inextendible geodesic. Arguing by contradiction, it is enough
to prove that if $T < +\infty$ then the
$\langle\cdot,\cdot\rangle_{R}$--length of $x(s)$ is bounded, and
so, $z$ can be extended to $T$ against the maximality
assumption (see \cite[Lemma 5.8]{O}).\\
As $\partial_y$ and $\partial_t$ are Killing vector fields, there
exist constants $c_1, c_2\in \R$ such that
\begin{equation}\label{e1}
\left\{
\begin{array}{l}
A(x)\dot{y} + B(x)\dot{t}\equiv c_1\\
B(x)\dot{y} - C(x)\dot{t}\equiv c_2
\end{array}
\right.\qquad \hbox{for all $s \in [0,T)$,}
\end{equation}
with
\begin{equation}\label{e0}
\s(x)\ =\ \left( \begin{array}{cc}
A(x) & B(x) \\
B(x) & -C(x)
\end{array} \right)
\end{equation}
symmetric matrix with $\det \s(x)=-H(x)<0$.\\
Furthermore, as $z$ is a geodesic, there exists a constant $E_z\in \R$
such that
\begin{equation}\label{e2}
\<\dot{z},\dot{z}\>_{L}=\<\dot{x},\dot{x}\>_{R}+A(x)\dot{y}^{2}+
2B(x)\dot{y}\dot{t}-C(x)\dot{t}^{2}\equiv E_z\quad \hbox{for all $s\in [0,T)$.}
\end{equation}
Thus, by (\ref{e1}) and (\ref{e2}) we get
\begin{equation}\label{e02}
\<\dot{x},\dot{x}\>_{R}+c_1\dot{y}+c_2\dot{t}=E_z\quad \hbox{for all $s\in [0,T)$.}
\end{equation}
On the other hand, by (\ref{e1}) and (\ref{stima1}) we have
\[
\dot{y}=\frac{c_1 C(x)+c_2 B(x)}{H(x)},\qquad \dot{t}=\frac{c_1
B(x)-c_2 A(x)}{H(x)}.
\]
Whence, by (\ref{e02}) and using the notation
$\|\dot{x}\|^{2}_{R}:=\<\dot{x},\dot{x}\>_R$, we get
\begin{equation}\label{e3}
\|\dot{x}\|^{2}_{R} \ = \ E_z+\frac{c_2^{2}A(x)-c_1^{2}C(x)-2c_{1}c_{2}B(x)}{H(x)}.
\end{equation}
Note that the symmetric matrix $\s(x)$ in (\ref{e0}) admits two
(non--null) real eigenvalues
\[
\Lambda_\pm(x)=\frac{A(x)-C(x) \pm \sqrt{(A(x)+C(x))^2 +
4B^2(x)}}{2},\qquad\hbox{with}\; \Lambda_{+}(x)>0>\Lambda_{-}(x).
\]
Recall that by standard arguments there exists an orthogonal matrix $Q(x)$ such that
\[
Q(x)^{T}\left( \begin{array}{cc}
A(x) & B(x) \\
B(x) & -C(x)
\end{array} \right)Q(x)=\left( \begin{array}{cc}
\Lambda_{+}(x) & 0 \\
0 & \Lambda_{-}(x)
\end{array} \right).
\]
Let us denote
$(\tilde{c}_{1},\tilde{c}_{2})=(c_{1}^{2}+c_{2}^{2})^{-
{1/2}}(c_{1},c_{2})$ and
$(\tilde{c}_{1}(x),\tilde{c}_{2}(x))=(\tilde{c}_{1},\tilde{c}_{2})Q(x)$.
By definition we have $\mu(x) = -2 \Lambda_{-}(x)$, and, by the
orthogonality of $Q(x)$, we have $[\tilde{c}_i(x)]^2 \le 1$ for $i
\in\{1,2\}$. So, we can rewrite (\ref{e3}) as:
\[
\begin{array}{rl}
\|\dot{x}\|^{2}_{R}\ = &\ \displaystyle{E_z+\frac{\left( \begin{array}{cc} c_1
c_2
\end{array} \right)\left( \begin{array}{cc}
A(x) & B(x) \\
B(x) & -C(x)
\end{array} \right)\left( \begin{array}{cc}
c_1  \\
c_2
\end{array} \right)}{H(x)}} \\
=&\ \displaystyle{E_z+\frac{\left( \begin{array}{cc} c_1 & c_2
\end{array} \right)Q(x)\left( \begin{array}{cc}
\Lambda_{+}(x) & 0 \\
0 & \Lambda_{-}(x)
\end{array} \right)Q(x)^{T}\left( \begin{array}{cc}
c_1  \\
c_2
\end{array} \right)}{H(x)} }\\
=&\ \displaystyle{E_z+(c_{1}^{2}+c_{2}^{2})\frac{\left( \begin{array}{cc} \tilde{c}_1 & \tilde{c}_2
\end{array} \right)Q(x)\left( \begin{array}{cc}
\Lambda_{+}(x) & 0 \\
0 & \Lambda_{-}(x)
\end{array} \right)Q(x)^{T}\left( \begin{array}{cc}
\tilde{c}_1  \\
\tilde{c}_2
\end{array} \right)}{H(x)} }\\
=&\ \displaystyle{E_z+(c_{1}^{2}+c_{2}^{2})\frac{\left( \begin{array}{cc} \tilde{c}_1(x) &
\tilde{c}_2(x)
\end{array} \right)\left( \begin{array}{cc}
\Lambda_{+}(x) & 0 \\
0 & \Lambda_{-}(x)
\end{array} \right)\left( \begin{array}{cc}
\tilde{c}_1(x)  \\
\tilde{c}_2(x)
\end{array} \right)}{-\Lambda_{+}(x)\Lambda_{-}(x)} }\\
=&\ \displaystyle{E_z - (c_{1}^{2}+c_{2}^{2})\left(\frac{[\tilde{c}_1(x)]^2}{\Lambda_{-}(x)}
\ +\ \frac{[\tilde{c}_2(x)]^2}{\Lambda_{+}(x)}\right)}\\
\le&\ \displaystyle{E_z - \frac{c_{1}^{2}+c_{2}^{2}}{\Lambda_{-}(x)}\ =\
E_z + 2 \frac{c_{1}^{2}+c_{2}^{2}}{\mu (x)}.}
\end{array}
\]
Thus, by (\ref{e4}) there exist suitable constants
$\bar{\lambda}, \bar{k}>0$ such that:
\[
\|\dot{x}(s)\|_{R}\ \leq\ \bar {\lambda} d(x(s),x(0))+\bar {k}\
\leq\ \bar{\lambda}\int_{0}^{s}\|\dot{x}({r})\|_R\, {\rm d}
{r}+\bar {k} \quad \hbox{for all $s \in [0,T)$.}
\]
In conclusion, we obtain
\[
\log\left(\bar{\lambda}\int_{0}^{s}\|\dot{x}({r})\|_R\, {\rm d}{r}+\bar{k}\right)-\log(\bar{k})\leq
\bar{\lambda}s\leq \bar{\lambda} T\quad \hbox{for all $s \in [0,T)$}
\]
which implies the boundedness of the
$\langle\cdot,\cdot\rangle_{R}$--length of $x(s)$ in $[0,T)$, as
required. 
\end{proof}


\begin{thebibliography}{99}


\bibitem{bcf1}
Bartolo, R., Candela, A.M., Flores, J.L.:
Geodesic connectedness of stationary spacetimes
with optimal growth. {J. Geom. Phys.} {\bf 56}, 2025--2038 (2006).

\bibitem{BCF}
Bartolo, R., Candela, A.M., Flores, J.L.:
A note on geodesic connectedness in G\"odel type spacetimes.
{Differential Geom. Appl.} {\bf 29}, 779--786 (2011).

\bibitem{bcfs}
Bartolo, R., Candela, A.M., Flores, J.L., S\'anchez, M.:
Geodesics in static Lorentzian manifolds
with critical quadratic behavior. {Adv. Nonlinear Stud.} {\bf 3}, 471--494 (2003).

\bibitem{BF}
Benci, V., Fortunato, D.:
On the existence of infinitely many
geodesics on space--time manifolds.
{Adv. Math.} {\bf 105}, 1--25 (1994).

\bibitem{BFG} 
Benci, V., Fortunato, D., Giannoni, F.:
On the existence of multiple geodesics in static space--times.
{Ann. Inst. H. Poincar\'e Anal. Non Lin\'eaire} {\bf 8}, 79--102 (1991).

\bibitem{cdt} 
Calv\~ao, M.O., Dami\~ao Soares, I., Tiomno, J.: 
Geodesics in G\"odel--type space--times.
{Gen. Relativity Gravitation} {\bf 22}, 683--705 (1990).

\bibitem{CFS2} Candela, A.M., Flores, J.L., S\'anchez, M.:
On general Plane Fronted Waves. Geodesics.
{Gen. Relativity Gravitation} {\bf 35}, 631--649 (2003).

\bibitem{CS}
Candela, A.M., S\'anchez, M.:
Geodesic connectedness in G\"odel type space--times.
{Differential Geom. Appl.} {\bf 12}, 105--120 (2000).

\bibitem{CS1}
Candela, A.M., S\'anchez, M.:
Existence of geodesics in G\"odel type space--times.
{Nonlinear Anal. TMA} {\bf 47}, 1581--1592 (2001).

\bibitem{CS2} 
Candela, A.M., S\'anchez, M.:
Geodesics in semi--Riemannian manifolds: geometric properties
and variational tools. 
In: Alekseevsky, D.V., Baum, H. (eds)
{Recent Developments in Pseudo--Riemannian Geometry}, pp. 359--418,
Special Volume in the ESI--Series on Mathematics and Physics,
EMS Publishing House (2008).

\bibitem{CSS} 
Candela, A.M., Salvatore, A., S\'anchez, M.:
Periodic trajectories in G\"odel type space--times. {Nonlinear Anal. TMA} {\bf 51}, 607--631 (2002).

\bibitem{cjs} 
Caponio, E., Javaloyes M.A., S{\'a}nchez, M.: 
On the interplay between Lorentzian causality and Finsler metrics of Randers type. 
{Rev. Mat. Iberoam.} {\bf 27}, 919--952 (2011).

\bibitem{rtt} 
Carrion, H.L., Rebou\c{c}as, M.J., Teixeira, A.F.F.:
G\"odel--type space--times in induced matter gravity theory.
{J. Math. Phys.}  {\bf 40}, 4011--4027 (1999).


\bibitem{cw}  
Chandrasekhar, S., Wright, J.P.:
The geodesics in G\"odel's universe. Proc. Natl. Acad. Sci. USA {\bf 47}, 341--347 (1961).

\bibitem{he}
Hawking, S.W., Ellis, G.F.R.:
The Large Scale Structure of Space--Time.
Cambridge University Press, London (1973).

\bibitem{ks}
Kramer, D., Stephani, H., Herlt, E., MacCallum, M.: 
{Exact Solutions of Einstein's Field Equations}.
Cambridge Univ. Press, Cambridge (1980).

\bibitem{k}
Kundt, W.:
Tr\"agheitsbahnen in einem von G\"odel angegebenen kosmologischen Modell.
{Z. Phys.} {\bf 145}, 611--620 (1956).

\bibitem{g} G\"odel, K.: 
An example of a new type of cosmological solution
of Einstein's field equations of gravitation.
{Rev. Mod. Phys.} {\bf 21}, 447--450 (1949).

\bibitem{mu}
M{\"u}ller, O.:
A note on closed isometric embeddings.
{J. Math. Anal. Appl.} {\bf 349}, 297--298 (2009).

\bibitem{O} 
O'Neill, B.: {Semi--Riemannian Geometry with Applications to Relativity}.
Academic Press, New York--London (1983).

\bibitem{pt}
Piccione, P., Tausk, D.V.:
An index theory for paths that are solutions 
of a class of strongly indefinite variational problems.
{Calc. Var. Partial Differential Equations} {\bf 15}, 529--551 (2002).

\bibitem{rab}
Rabinowitz, P.H.:
{Minimax Methods in Critical Point Theory
with Applications to Differential Equations}.
CBMS Reg. Conf. Ser. Math. {\bf 65}, Amer. Math. Soc., Providence (1986).

\bibitem{rt} Raychaudhuri, A.K., Thakurta, S.N.G.:
Homogeneous Space-times of the G\"odel type. {Phys. Rev. D} {\bf 22}, 802--806 (1980).

\bibitem{ret} Rebou\c{c}as, M.J., Tiomno, J.:
Homogeneity of Riemannian space--times of G\"odel type.
{Phys. Rev. D} {\bf 28}, 1251--1264 (1983).

\bibitem{san} S{\'a}nchez, M.:
Some remarks on causality theory.
{Conf. Semin. Mat. Univ. Bari} {\bf 265}, 1--12 (2002).

\bibitem{sant}
Santal\'o, L.A.: Geodesics in G\"odel--Synge spaces. 
{Tensor N.S.} {\bf 37}, 173--178 (1982).

 


\end{thebibliography}
\end{document}